\documentclass{amsart}
\usepackage{graphicx} 

\usepackage{amsmath}
\usepackage{amssymb}
\usepackage{amsbsy}
\usepackage{amssymb}

\usepackage[hidelinks]{hyperref}
\usepackage{cleveref} 

\usepackage{mathdots} 
\usepackage{enumerate}

\usepackage{mathrsfs} 

\usepackage{txfonts}  
\usepackage{upgreek}
\usepackage{dutchcal}
\usepackage{frcursive}
\usepackage{calligra}
\usepackage[T1]{fontenc}

\usepackage{verbatim}

\usepackage{stmaryrd} 

\usepackage{fancyhdr}  

\usepackage{fullpage}

\usepackage{comment}

\usepackage{algorithm}

\usepackage{algpseudocode}

\usepackage{tikz}

\usetikzlibrary{arrows,positioning}

\usepackage{makecell}

\usetikzlibrary{datavisualization.formats.functions}

\usepackage{pgfplots}

\usepackage{graphicx,amssymb}

\usepackage{ifthen}

\usepackage{epsfig}

\newtheorem{theo}{Theorem}

\newtheorem{coro}{Corollary}

\newtheorem{clai}{Claim}

\renewcommand{\le}{\leqslant}
\renewcommand{\leq}{\leqslant}

\renewcommand{\ge}{\geqslant}
\renewcommand{\geq}{\geqslant}

\newcommand{\vep}{\varepsilon}

\newcommand{\yn}{\ensuremath{\{y_n\}_{n=1}^\infty}}

\newcommand{\zn}{\ensuremath{\{z_n\}_{n=1}^\infty}}

\newcommand{\norm}[1]{\| #1\|}

\newcommand{\cof}{\textrm{cof}}

\usepackage{amsmath}
\usepackage{amssymb}
\usepackage{amsbsy}
\usepackage{thmtools}
\usepackage{amsthm}
\usepackage[hidelinks]{hyperref}
\usepackage{cleveref} 

\usepackage{mathdots} 
\usepackage{enumerate}

\title[AMUC without AUC renorming]{The Kadets--Werner modification of Bourgain--Rosenthal space is asymptotically midpoint uniformly convex}
\author{Florent Baudier}
\address{Department of Mathematics, Texas A\&M University, College Station, TX 77843-3368, U.S.A.}
\email{florent@tamu.edu}
\urladdr{https://people.tamu.edu/~florent/}

\subjclass[2020]{46B03,46B10,46B20}

\keywords{asymptotic midpoint uniform convexity, asymptotic uniform convexity, Daugavet Property, asymptotic geometry of Banach spaces}

\date{\today}

\begin{document}
\begin{abstract}
Let $X_{\mathsf{KW}}$ be the Kadets--Werner modification of a closed subspace of $L_1$ constructed by Bourgain and Rosenthal in 1980. In this short note, it is shown that $X_{\mathsf{KW}}$ is asymptotically midpoint uniformly convex. Since $X_{\mathsf{KW}}$ has the Daugavet Property, it fails the Point of Continuity Property and thus does not admit an equivalent norm that is asymptotically uniformly convex. Therefore, the previously known properties of $X_{\mathsf{KW}}$ and the new geometric observation answer, in the negative, the question of Dilworth, Kutzarova, Randrianarivony, Revalski and Zhivkov whether asymptotic midpoint uniform convexity and asymptotic uniform convexity are isomorphically equivalent and a question of Perreau whether asymptotic midpoint uniform convexity implies the Point of Continuity Property.
\end{abstract}
\maketitle

\textbf{AI Disclosure:} The proof of the new observation was generated by OpenAI’s GPT-6 Astra during a chat initiated by the author in an attempt to understand what spaces admit an equivalent norm that is asymptotically sprawling uniformly convex. This notion was recently introduced in \cite{BLP} and shown to be equivalent to asymptotic midpoint uniform convexity. The author has checked the mathematical correctness of the proof and edited it for clarity, presentation, and context. The author acknowledges OpenAI for providing platform access via the ChatGPT for Academic Researchers program.

\section{Introduction}

Given a Banach space $(X,\norm{\cdot}_X)$, its modulus of \emph{asymptotic uniform convexity} is defined, for $t\in(0,\infty)$, by
\begin{equation*}
 \overline\delta_X(t) : =\inf_{x\in S_X}\ \sup_{Y\in \cof(X)}\ \inf_{y\in S_Y}
       \bigl(\norm{x+ty}_X-1\bigr),
\end{equation*}
where $\cof(X)$ denotes the set of closed finite-codimensional subspaces of $X$.
The space $X$, or more precisely its norm, is called \emph{asymptotically uniformly convex} if $\overline\delta_X(t)>0$ for every $t>0$. The history and importance of this fundamental geometric notion, dating back to Milman \cite{Milman71} and popularized in \cite{JLPS02}, are discussed in \cite{BL}. More recently, a weakening of this notion was introduced by Dilworth, Kutzarova, Randrianarivony, Revalski and Zhivkov \cite{Detal}. A Banach space $(X,\norm{\cdot}_X)$ is said to be \emph{asymptotically midpoint uniformly convex} if $\tilde\delta_X(t)>0$ for every $t>0$, where this time 
\begin{equation*}
 \tilde\delta_X(t) :=\inf_{x\in S_X}\ \sup_{Y\in \cof(X)}\ \inf_{y\in S_Y}
       \bigl(\max\{\norm{x+ty}_X,\norm{x-ty}_X\}-1\bigr).
\end{equation*}

It is elementary to check that $\tilde\delta_X\ge\overline\delta_X$, and hence every asymptotically uniformly convex norm is asymptotically midpoint uniformly convex. A renorming of $\ell_2$ was constructed in \cite{Detal} to distinguish the two notions isometrically. It was also shown there that every asymptotically midpoint uniformly convex Banach space with an unconditional basis admits an equivalent norm that is asymptotically uniformly convex, and naturally the question whether this remains true in general was asked by Dilworth, Kutzarova, Randrianarivony, Revalski and Zhivkov in \cite{Detal} and is listed as Problem 14 in \cite{BL}. In \cite{Perreau}, Perreau studied the original as well as the weak$^*$-versions (in dual spaces) of the notions above. He showed that the weak*-versions of AMUC and AUC are equivalent up to renorming in the class of duals of separable spaces with a weak*-unconditional asymptotic structure, but not in general, as witnessed by the dual of the James tree space.

Perreau specifically asked whether asymptotic midpoint uniform convexity implies the Point of Continuity Property. Recall that a Banach space has the Point of Continuity Property if for every nonempty bounded closed subset $C$, the identity map from $C$ equipped with the relative weak topology to $C$ but this time equipped with the relative norm topology, has a point of continuity. Note that the Point of Continuity Property is preserved under equivalent renorming. For our purposes, this property is best seen from the perspective of fragmentability indices. Indeed, $X$ has the Point of Continuity Property if and only if for every nonempty bounded closed subset $C$ of $X$, $C$ is weakly fragmentable, meaning that for all $\vep>0$ there is a nonempty relatively weakly open subset of $C$ of diameter at most $\vep$. 

A property going somewhat in the opposite direction from the Point of Continuity Property is the Daugavet Property. Originally defined as an additive property of operator norms, namely $\norm{\mathrm{Id}+T}=\norm{\mathrm{Id}} +\norm{T}$ must hold for every rank one operator, the Daugavet Property has been characterized in terms of a property of slices in \cite{KSSW} and of weakly open sets by Shvydkoy \cite{Shvydkoy}. In particular, it follows from \cite{Shvydkoy} that in a Banach space with the Daugavet property, every nonempty relatively weakly open subset of its unit ball has diameter $2$ (see for instance \cite[Theorem 2.1]{ALN}). Therefore, it immediately follows that a Banach space with the Daugavet Property must fail the Point of Continuity Property. It is worth pointing out that a space with the Daugavet Property does not have an unconditional basis \cite{Kadets}; in fact, it does not even embed into a space with one \cite{KSSW}.

It is easy to see that asymptotic uniform convexity implies the Point of Continuity Property once we have the reformulation of asymptotic uniform convexity in terms of a ``small weakly open set'' property. A Banach space is asymptotically uniformly convex if and only if for every $\vep>0$ there is $\delta>0$ such that for every point $x\in B_X$ with $\norm{x}\geq 1-\delta$, there is a relatively weakly open subset of $B_X$ containing $x$ with diameter at most $\vep$.

To summarize the above discussion, a Banach space with the Daugavet Property automatically fails the Point of Continuity Property and thus does not admit an equivalent asymptotically uniformly convex norm. Therefore, a candidate for an example of a Banach space that is asymptotically midpoint uniformly convex but lacks any asymptotically uniformly convex renorming is a Banach space with the Daugavet Property. Classic examples of Banach spaces with the Daugavet Property are $C([0,1])$ or $L_1([0,1])$, but these two examples do not have asymptotically midpoint uniformly convex renormings (see \cite{Baudieretal} for instance). Another example of a space with the Daugavet Property is $X_{\mathsf{KW}}$, the Kadets--Werner modification \cite{KW} of the Bourgain--Rosenthal closed subspace of $L_1$ \cite{BR}. The Bourgain--Rosenthal construction takes the closure of the union of some well-chosen increasing sequence of finite-dimensional subspaces of $L_1$, to provide an example of a (separable) Banach space failing the Infinite Tree Property as well as the Radon--Nikod\'ym Property. The modification of Kadets and Werner provides us with a (separable) Banach space with the Schur Property and the Daugavet Property (and still failing the Infinite Tree Property and the Radon--Nikod\'ym Property). 

In the next section, it is shown that $X_{\mathsf{KW}}$ is asymptotically midpoint uniformly convex, thereby providing negative answers to the question of Dilworth, Kutzarova, Randrianarivony, Revalski and Zhivkov and to the question of Perreau.

\section{Asymptotic midpoint uniform convexity and compactness for convergence in measure}

The key to the asymptotic midpoint uniform convexity of $X_{\mathsf{KW}}$ is the relative compactness of its unit ball with respect to the topology of convergence in measure. This is the only feature, together with the Daugavet Property, which will be needed. 

\begin{theo}[]\label{thm:main}
    Let $Y$ be a closed subspace of $L_1$ whose unit ball is relatively compact with respect to the topology of convergence in measure. Then, $Y$ is asymptotically midpoint uniformly convex.
\end{theo}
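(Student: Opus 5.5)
The plan is to work with the average of the two norms instead of the maximum, using the pointwise identity $|a+b|+|a-b|=2\max\{|a|,|b|\}$, valid for real (or complex) scalars. For $x\in S_Y$ and $z\in Y$ it gives
\begin{equation*}
\max\{\norm{x+tz},\norm{x-tz}\}\ \ge\ \tfrac12\bigl(\norm{x+tz}+\norm{x-tz}\bigr)=\int\max\{|x|,t|z|\} = 1+\int\bigl(t|z|-|x|\bigr)^+ .
\end{equation*}
So it suffices to find $\delta=\delta(t)>0$ with the following property: for every $x\in S_Y$ there is $Z\in\cof(Y)$ such that $\int(t|z|-|x|)^+\ge\delta$ for all $z\in S_Z$. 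Heuristically, $z$ should be made to carry most of its mass where $|x|$ is negligible.

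I would argue by contradiction and compactness. Fix $t$ and suppose there are $x_n\in S_Y$ for which every $Z\in\cof(Y)$ contains some $z\in S_Z$ with $\int(t|z|-|x_n|)^+<1/n$. By the hypothesis, pass to a subsequence with $x_n\to x$ in measure; by Fatou $\norm{x}\le 1$. Next, I would take a countable family $(h_k)\subset L_\infty$, chosen to depend on the $x_n$, on $x$ and on the measure-limits still to be produced. Then for each $n$ choose $z_n\in S_Y$ that is bad for $x_n$ and lies in the finite-codimensional subspace $Z_n=\{y\in Y:\int h_k y=0,\ k\le n\}$. Compactness again lets us assume $z_n\to g$ in measure, with $\norm{g}\le1$. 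Write $z_n=g+r_n$ with $r_n\to0$ in measure.

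The technical core is a Brezis--Lieb type splitting. Since $x,g$ are fixed integrable functions and $r_n\to0$ in measure, the mass of $r_n$ asymptotically escapes to sets of vanishing measure, and there it is disjoint from $x$ and $g$. This should give
\begin{equation*}
\norm{z_n}\to \norm{g}+\lim\norm{r_n},\qquad \int\bigl(t|z_n|-|x_n|\bigr)^+ \ \ge\ \int\bigl(t|g|-|x|\bigr)^+ + t\lim\norm{r_n} - o(1).
\end{equation*}
To justify this I would truncate to sets of small measure and use uniform integrability of the finitely many fixed functions together with $\norm{x_n-x}_1\to0$ on the complement. The complement part needs some care, since $x_n\to x$ only in measure; I would use Brezis--Lieb for $x_n$ as well. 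Because $1=\norm{z_n}$, we have $\lim\norm{r_n}=1-\norm{g}$. The contradiction hypothesis then forces two things: $\norm{g}=1$, and $t|g|\le|x|$ almost everywhere.

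The main obstacle is ruling out this remaining case, where the weak-type limit $g$ is a full-norm function dominated by $|x|/t$. This is where the finite-codimensional constraints must be used. I would choose the $h_k$ to include bounded truncations of $\mathrm{sgn}(x)$ and $\mathrm{sgn}(g)$ times indicators of sets of the form $\{|x|\le M\}$, so that $\int h_k z_n=0$ constrains the bulk part $g$ of $z_n$. On the set where $x$ is bounded, $t|g|\le|x|$ makes $g$ bounded. There the perturbations $r_n$, which are small in measure, contribute to $\int h_k z_n$ only through their escaping mass. This has to be controlled carefully, and I would handle it by a diagonal choice of the $h_k$ along the way, picking them so they annihilate the limits of $\int h\, r_n$. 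The target is to derive $\int h g=0$ for a norming family of $h$, hence $g=0$, contradicting $\norm{g}=1$. I expect this identification of $g$ to be the delicate step. Without it, the averaged estimate degenerates exactly when $t\le1$ and $z$ aligns with $x/t$.
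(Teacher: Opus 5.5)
There is a genuine gap at the splitting step. Your reduction to the averaged quantity $\int(t|z|-|x|)^+$ is fine; for complex scalars the identity becomes an inequality in the right direction.

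The trouble comes from letting $x_n$ vary while knowing only that $x_n\to x$ in measure. Under those conditions the bound $\int(t|z_n|-|x_n|)^+\ge\int(t|g|-|x|)^++t\lim\norm{r_n}-o(1)$ is not justified.

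\begin{itemize}
\item Replacing $x_n$ by $x$ costs $\norm{x_n-x}_1$. By Brezis--Lieb this tends to $1-\norm{x}_1$, not to $0$.
\item The escaping mass of $x_n-x$ can sit on exactly the small sets that carry the escaping mass of $r_n$. That cancels the term $t\lim\norm{r_n}$.
\item As a statement about sequences converging in measure, the bound is false. Take $t=1$ and $x_n=z_n=n\mathbf{1}_{[0,1/n]}$. Then $x=g=0$ and $\lim\norm{r_n}=1$, while the left side is $0$.
\item Finitely many constraints $\int h_k z_n=0$ with bounded $h_k$ do nothing to keep the supports of $z_n$ and $x_n$ apart.
\end{itemize}

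So the dichotomy ``$\norm{g}=1$ and $t|g|\le|x|$'' is not established. The final identification step is also left open. As described it is circular: the $h_k$ are supposed to depend on measure-limits of the very $z_n$ that are chosen inside $\bigcap_{k\le n}\ker h_k$.

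The missing observation is that you need no compactness in $x$, because the bound you get for a single fixed $x$ is already uniform.

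\begin{enumerate}
\item Fix $x\in S_Y$ and $\eta<t/2$. Suppose every $Z\in\cof(Y)$ contains some $z\in S_Z$ with $\int(t|z|-|x|)^+\le\eta$.
\item Pick such $z_n$ inductively in $Y\cap\bigcap_{k<n}\ker h_k$. Here $h_k\in L_\infty$ satisfies $\norm{h_k}_\infty=1$ and $\int h_k z_k=1$. Then $\norm{z_n-z_m}_1\ge 1$ for $n\ne m$.
\item Pass to a subsequence with $z_n\to g$ in measure, and set $r_n=z_n-g$.
\item With $x$ fixed the splitting is exact. The pointwise error $D_n=(t|g+r_n|-|x|)^+-(t|g|-|x|)^+-t|r_n|$ satisfies $|D_n|\le\min\{2t|r_n|,\,2t|g|+|x|\}$. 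Dominated convergence gives $\int|D_n|\to 0$.
\item Hence $t\limsup\norm{r_n}_1\le\eta$, so $\limsup_{n,m}\norm{z_n-z_m}_1\le 2\eta/t<1$. This contradicts the separation from step 2.
\end{enumerate}

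This gives $\tilde\delta_Y(t)\ge t/2$ with no analysis of $g$ at all. It is the paper's mechanism: a separated sequence plus the Brezis--Lieb splitting of its escaping part. The difference is that it runs directly on the modulus, instead of going through the Basset--Lancien--Proch\'azka spider characterization.
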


\begin{coro}[]
    The space $X_{\mathsf{KW}}$ is asymptotically midpoint uniformly convex, fails the Point of Continuity Property, and thus does not admit an equivalent norm that is asymptotically uniformly convex.
\end{coro}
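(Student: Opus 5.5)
The corollary follows by combining Theorem~\ref{thm:main} with the known properties of $X_{\mathsf{KW}}$ recalled in the introduction. There are three assertions, and I will take them in order: asymptotic midpoint uniform convexity, failure of the Point of Continuity Property, and the absence of an equivalent asymptotically uniformly convex norm.

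\textbf{Asymptotic midpoint uniform convexity.} I will check that $X_{\mathsf{KW}}$ satisfies the hypothesis of Theorem~\ref{thm:main}.

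First, $X_{\mathsf{KW}}$ must be realized as a closed subspace of $L_1$. Following \cite{KW}, it is the closure of an increasing union of finite-dimensional subspaces of $L_1$, as in the Bourgain--Rosenthal scheme \cite{BR}. Its norm is the restriction of the $L_1$-norm.

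Second, its unit ball must be relatively compact for convergence in measure. I expect this to be the main obstacle, since it requires going back into the construction. This compactness is the feature Kadets and Werner use in \cite{KW} to obtain the Schur property, so I would extract it from there.

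If it is not stated explicitly, I would argue directly from the construction. The Bourgain--Rosenthal spaces are built from functions whose "martingale-like" increments are controlled. Concretely, at stage $n$ the new directions are supported on sets of measure tending to $0$, with bounded $L_1$-mass. I would then show that every element of the unit ball is within $\varepsilon$ in measure of the unit ball of a fixed finite-dimensional stage $E_n$. That unit ball is compact. Hence the unit ball of $X_{\mathsf{KW}}$ is totally bounded in the metric of convergence in measure, which gives relative compactness.

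With the hypothesis verified, Theorem~\ref{thm:main} gives that $X_{\mathsf{KW}}$ is asymptotically midpoint uniformly convex.

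\textbf{Failure of the Point of Continuity Property.} By \cite{KW}, $X_{\mathsf{KW}}$ has the Daugavet Property. By Shvydkoy's characterization \cite{Shvydkoy}, see also \cite[Theorem 2.1]{ALN}, every nonempty relatively weakly open subset of $B_{X_{\mathsf{KW}}}$ has diameter $2$.

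Take $C=B_{X_{\mathsf{KW}}}$, which is nonempty, bounded and closed. No relatively weakly open subset of $C$ has diameter at most $\varepsilon<2$, so $C$ is not weakly fragmentable. By the fragmentability reformulation recalled in the introduction, $X_{\mathsf{KW}}$ fails the Point of Continuity Property.

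\textbf{No equivalent asymptotically uniformly convex norm.} Suppose $|\cdot|$ were an equivalent asymptotically uniformly convex norm on $X_{\mathsf{KW}}$. By the small weakly open set characterization recalled in the introduction, $(X_{\mathsf{KW}},|\cdot|)$ would have the Point of Continuity Property.

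The Point of Continuity Property is invariant under equivalent renormings, because the weak and norm topologies are unchanged and closed bounded sets are preserved. So $X_{\mathsf{KW}}$ would have the Point of Continuity Property in its original norm, contradicting the previous step.

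The asymptotically uniformly convex implies Point of Continuity Property step works as follows. Take any nonempty closed bounded $C$ and scale it into the unit ball. Choose $x\in C$ with $|x|$ close to $\sup_C|\cdot|$, and normalize so that this supremum is $1$. The characterization then provides a weakly open neighbourhood of $x$ in $B_X$ of diameter at most $\varepsilon$. Its trace on $C$ is a nonempty relatively weakly open subset of $C$ of diameter at most $\varepsilon$. So every such $C$ is weakly fragmentable, which is the Point of Continuity Property.
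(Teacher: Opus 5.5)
Your proposal is correct and follows the paper's route. Like the paper, you get asymptotic midpoint uniform convexity by feeding the relative compactness in measure of $B_{X_{\mathsf{KW}}}$, taken from the Bourgain--Rosenthal/Kadets--Werner construction, into Theorem~\ref{thm:main}, and you deduce the other two assertions from the Daugavet Property (every nonempty relatively weakly open subset of the ball has diameter $2$, so the Point of Continuity Property fails), the implication from asymptotic uniform convexity to the Point of Continuity Property via the small weakly open set characterization, and the invariance of that property under renorming. Your fallback sketch of the compactness is only heuristic, but the paper does not prove that step either and relies on it as a known feature of the construction.
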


In effect, the proof will show that a subspace of $L_1$ satisfying the assumption of Theorem \ref{thm:main} is asymptotically sprawling uniformly convex in the terminology of \cite{BLP}. This property, introduced by Basset, Lancien, and Proch\'azka in \cite{BLP}, was shown to be equivalent to asymptotic midpoint uniform convexity. Of course, one could easily present the proof by avoiding this equivalent reformulation of asymptotic midpoint uniform convexity, but since it seems that it is this specific reformulation of asymptotic midpoint uniform convexity that allowed the LLM to discover the elegant argument below, it is preferable to adopt the following presentation. According to \cite{BLP}, given a Banach space $(X,\norm{\cdot}_X)$, $\vep>0$, and $x\in B_X$, two sequences $\yn$ and $\zn$ in $B_X$ form an $\vep$-\emph{spider rooted at $x$} if
\begin{enumerate}
    \item  $x= \frac{y_n+z_n}{2}$, for all $n\ge1$,
    \item  $\inf_{n\neq m}\norm{y_n-y_m}_X\ge \vep$.
\end{enumerate}
A Banach space $(X,\norm{\cdot}_X)$ is \emph{asymptotically sprawling uniformly convex} if for every $\vep>0$ there is $\delta>0$ such that for every point $x\in B_X$ that is the root of an $\vep$-spider in $B_X$ one has $\norm{x}_X\leq 1-\delta$.

The fact that a Banach space is asymptotically sprawling uniformly convex if and only if it is asymptotically midpoint uniformly convex, and that their respective moduli are comparable, can be found in \cite[Proposition 6.16]{BLP}.

\begin{proof}[Proof of Theorem \ref{thm:main}]

Fix $\vep>0$ and let $\yn$ and $\zn$ in $B_Y$ form an $\vep$-spider rooted at $x\in B_Y$. Since $B_Y$ is relatively compact in measure, we can extract a common subsequence of $\yn$ and $\zn$ (still denoted the same) such that $\yn$ and $\zn$ converge almost everywhere to $y$ and $z$, respectively,
for some $y,z\in L_1$. Note that $\frac{y+z}{2}=x$ and it follows from Fatou's lemma that $\max\{\norm{y}_1, \norm{z}_1\}\le 1$. What remains to be shown is that the vectors $y$ and $z$ are uniformly far from the unit sphere to be able to control the norm of the root of the $\vep$-spider. The key ingredient is the following elementary measure-theoretic fact. 

\begin{clai}
\label{claim}
If a sequence $\{f_n\}_{n=1}^\infty$ in $L_1$ converges almost everywhere to some $f\in L_1$, then $\lim_{n\to\infty}( \norm{f_n}_1-\norm{f_n-f}_1)=\norm{f}_1$.
\end{clai}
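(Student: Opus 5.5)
This is the Brezis--Lieb lemma for $p=1$, and I would prove it via dominated convergence applied to the pointwise quantity
\[
g_n := \bigl|\,|f_n| - |f_n - f| - |f|\,\bigr|.
\]
The goal is to show $\int g_n \to 0$. Since $\norm{f_n}_1-\norm{f_n-f}_1-\norm{f}_1=\int (|f_n|-|f_n-f|-|f|)$, the claim follows at once from $\bigl|\int(\cdots)\bigr|\le \int g_n$.

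First, pointwise convergence. Since $f_n\to f$ almost everywhere, for almost every point we have $|f_n|\to|f|$ and $|f_n-f|\to 0$. Hence $g_n\to 0$ almost everywhere.

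Second, a domination independent of $n$. By the triangle inequality, $\bigl||f_n|-|f_n-f|\bigr|\le |f|$ pointwise. Therefore
\[
g_n\le \bigl||f_n|-|f_n-f|\bigr| + |f| \le 2|f|,
\]
and $2|f|\in L_1$ since $f\in L_1$. Dominated convergence then gives $\int g_n\to 0$, which is exactly $\lim_{n\to\infty}(\norm{f_n}_1-\norm{f_n-f}_1)=\norm{f}_1$.

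The only point needing care is the domination. The $f_n$ themselves need not be uniformly integrable or even bounded in $L_1$; in the application they are bounded, but that is not needed here. The trick is that only the difference $|f_n|-|f_n-f|$ enters, and it is controlled by $|f|$ alone. Under the convention that $\norm{f_n}_1$ might be infinite, one notes that $f_n\in L_1$ is assumed, so every quantity involved is finite.
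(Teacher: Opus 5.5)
Your proof is correct and is essentially the argument the paper has in mind. The reverse triangle inequality gives the $n$-independent bound $\bigl||f_n|-|f_n-f|-|f|\bigr|\le 2|f|$, and dominated convergence then finishes the argument; this is the $p=1$ case of the Brezis--Lieb lemma, which the paper cites.
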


This claim (which can be found in \cite{BrezisLieb1983}) is easily derived from the reverse triangle inequality and the dominated convergence theorem.

\smallskip\noindent

Since $\yn$ is bounded, we can pass to a further subsequence so that $\lim_{n\to\infty}\norm{y_n-y}_1 = \rho$ for some $\rho\in[0,2]$. It follows from the separation of $\yn$ and the triangle inequality that $\rho\ge\vep/2$. Now, observe that 
\[
 z_n-z=(2x-y_n)-(2x-y)=-(y_n-y),
\]
and hence we also have $\norm{z_n-z}_1\longrightarrow\rho$. 

\smallskip\noindent

To conclude the proof, we invoke Claim \ref{claim} twice, once for $\yn$ and once for $\zn$, so that  
\[
 \lim_{n\to\infty}(\norm{y_n}_1 - \norm{y_n-y}_1) = \norm{y}_1 
 \qquad 
\text{ and } 
\qquad
 \lim_{n\to\infty}(\norm{z_n}_1 - \norm{z_n-z}_1) = \norm{z}_1.
\]
It then follows from the estimates collected above that 
\[
 \norm{y}_1\le 1-\rho\le 1- \frac{\vep}{2} 
 \qquad\text{ and }
 \qquad
 \qquad \norm{z}_1\le 1-\rho\le 1- \frac{\vep}{2}.
\]

Recalling that $x=(y+z)/2$, the triangle inequality now gives $\norm{x}_1\le 1- \frac{\vep}{2}$.
\end{proof}

It follows from the proof of Theorem \ref{thm:main} and \cite[Proposition 6.16]{BLP} that the modulus of asymptotic midpoint uniform convexity of $X_{\mathsf{KW}}$ satisfies $\tilde{\delta}_{X_{\mathsf{KW}}}(t)\ge ct$ for some universal constant $c>0$.

\end{document}